\newtheorem{theorem}{Theorem}
\newenvironment{proof}[1][Proof]{\noindent\textbf{#1.} }{\ \rule{0.5em}{0.5em}}
\begin{document}

\title{A simplified proof of CLT for convex bodies}
\author{Daniel J. Fresen\thanks{%
University of Pretoria, Department of Mathematics and Applied Mathematics,
daniel.fresen@up.ac.za or djfb6b@mail.missouri.edu. MSC2010: 52A20, 52A23, 60F05 (Primary), 26B25, 52A38, 62E20 (Secondary). Keywords: central limit theorem for convex bodies, log-concave function.}}
\maketitle

\begin{abstract}
We present a short proof of Klartag's central limit theorem for convex
bodies, using only the most classical facts about log-concave functions. An
appendix is included where we give the proof that thin shell implies CLT.
The paper is accessible to anyone.
\end{abstract}

\section{Introduction}

The central limit theorem for convex bodies (Theorem \ref{CLT conv bodies}
below) was conjectured by Brehm and Voigt \cite{BrVo} and independently (at
about the same time) by Anttila, Ball and Perissinaki \cite{ABP}. A 1998
preprint of \cite{ABP} is cited in \cite{Bo}. It took several years and
various partial results before a full proof by Klartag emerged in \cite{Kl}
(see p95 for the history). A different proof was given soon afterwards by
Fleury, Gu\'{e}don, and Paouris \cite{FGP}. Significantly improved
quantitative bounds (from logarithmic to power type) were given by Klartag 
\cite{Kl4}, followed by improved estimates by various authors on the related 'thin shell
property' \cite{Fl, GuMi11, LeVe17}. More information can be found in \cite%
{Fl, GuMi11, Kl, Kl4, Kl2, LeVe18}.

We present a simple proof that is self-contained (except for very classical
results such as the Pr\'{e}kopa-Leindler inequality) and is accessible to
anyone. The bounds on $\varepsilon _{n}$ and $\omega _{n}$ that this proof
gives are poor; the contribution is simplicity. The methodology is a
variation of that in Klartag's original proof and uses Fourier inversion;
the main difference being that we apply concentration directly to the
Fourier transform as opposed to the measure of half-spaces. The statement of
Theorem \ref{CLT conv bodies} below is not identical to Theorem 1.1 in \cite%
{Kl}, however under log-concavity, a uniform estimate on the cumulative
distribution gives an estimate on the total variation distance, so we do
indeed recover Theorem 1.1 in \cite{Kl}. The standard Euclidean norm and
inner product on $\mathbb{R}^{n}$ are denoted as $\left\vert \cdot
\right\vert $ and $\left\langle \cdot ,\cdot \right\rangle $ respectively.

\begin{theorem}
\label{CLT conv bodies}There exist sequences $\left( \varepsilon _{n}\right)
_{1}^{\infty }$ and $\left( \omega _{n}\right) _{1}^{\infty }$ in $\left(
0,\infty \right) $ with $\lim_{n\rightarrow \infty }\varepsilon
_{n}=\lim_{n\rightarrow \infty }\omega _{n}=0$ such that the following is
true: Let $n\in \mathbb{N}$, let $X$ be a random vector in $\mathbb{R}^{n}$
with $\mathbb{E}X=0$ and $\mathrm{Cov}\left( X\right) =I_{n}$. Assume that $%
X $ has a density $f=d\mu /dx$ that is log-concave, i.e. $f=e^{-g}$ where $g:%
\mathbb{R}^{n}\rightarrow \left( -\infty ,\infty \right] $ is convex. Then
there exists a set $\Theta \subset S^{n-1}$ with $\sigma _{n-1}\left(
S^{n-1}\right) \geq 1-\omega _{n}$ such that for all $\theta \in \Theta $,%
\[
\sup_{t\in \mathbb{R}}\left\vert \mathbb{P}\left\{ \left\langle X,\theta
\right\rangle \leq t\right\} -\Phi \left( t\right) \right\vert \leq
\varepsilon _{n} 
\]%
where $\sigma _{n-1}$ is Haar measure on $S^{n-1}$ normalized so that $%
\sigma _{n-1}\left( S^{n-1}\right) =1$, and $\Phi \left( t\right) =\left(
2\pi \right) ^{-1/2}\int_{-\infty }^{t}\exp \left( -u^{2}/2\right) du$.
\end{theorem}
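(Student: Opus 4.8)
The plan is to pass to characteristic functions and invert. Write $\widehat f(y)=\mathbb E\,e^{-i\langle X,y\rangle}$, so that the characteristic function of $\langle X,\theta\rangle$ is $r\mapsto\widehat f(r\theta)$. By the Pr\'ekopa--Leindler inequality the density of $\langle X,\theta\rangle$ is log-concave, and since $\mathrm{Cov}(X)=I_n$ it has mean $0$ and variance exactly $1$; hence its third absolute moment is at most a universal constant and $\mathbb E|\langle X,\theta\rangle|\le1$, for \emph{every} $\theta\in S^{n-1}$. By Esseen's smoothing inequality it then suffices to produce $T_n\to\infty$ (growing as slowly as we wish) and sets $\Theta\subset S^{n-1}$ with $\sigma_{n-1}(\Theta)\ge 1-\omega_n$ such that
\[
\sup_{0\le r\le T_n}\bigl|\widehat f(r\theta)-e^{-r^{2}/2}\bigr|\le\varepsilon_n' \qquad (\theta\in\Theta),
\]
with $\varepsilon_n'\log T_n\to0$; the contribution of small $r$ to the Esseen integral is controlled by the universal bound on the third moment, and, under log-concavity, such a uniform estimate on the distribution function upgrades to the total-variation formulation of \cite{Kl}.

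To see how $\widehat f(r\theta)$ behaves for typical $\theta$, I would integrate over the sphere. One has $\int_{S^{n-1}}e^{-is\langle z,\theta\rangle}\,d\sigma_{n-1}(\theta)=\omega_n(s|z|)$, where $\omega_n(u):=\int_{S^{n-1}}\cos(u\,\theta_{1})\,d\sigma_{n-1}(\theta)$; since the distribution of one coordinate of a uniformly random point of $\sqrt n\,S^{n-1}$ is within $O(1/n)$ of the standard one-dimensional Gaussian in total variation, it follows that $\omega_n(s)=e^{-s^{2}/(2n)}+O(1/n)$ uniformly in $s\ge0$. Hence
\[
\int_{S^{n-1}}\widehat f(r\theta)\,d\sigma_{n-1}(\theta)=\mathbb E\,e^{-r^{2}|X|^{2}/(2n)}+O(1/n)
\]
and
\[
\int_{S^{n-1}}\bigl|\widehat f(r\theta)\bigr|^{2}\,d\sigma_{n-1}(\theta)=\mathbb E\,e^{-r^{2}|X-X'|^{2}/(2n)}+O(1/n),
\]
where $X'$ is an independent copy of $X$.

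Now two estimates. Since $\mathbb E|X|^{2}=n$, Jensen's inequality gives $\mathbb E\,e^{-r^{2}|X|^{2}/(2n)}\ge e^{-r^{2}/2}$, while an elementary Taylor-with-remainder estimate for $s\mapsto e^{-r^{2}s/(2n)}$ gives the matching upper bound $e^{-r^{2}/2}+\tfrac{r^{4}}{8}\,\mathrm{Var}(|X|^{2})/n^{2}$. \emph{This is the only point at which the argument uses anything beyond classical facts about log-concave functions}: granting the thin-shell estimate $\mathrm{Var}(|X|^{2})=o(n^{2})$, the spherical average of $\widehat f(r\theta)$ equals $e^{-r^{2}/2}+o(1)$, uniformly for $r\le T_n$. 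For the second estimate, $|X-X'|^{2}=|X|^{2}+|X'|^{2}-2\langle X,X'\rangle$ and $\mathrm{Var}(\langle X,X'\rangle)=n$ follows from $\mathrm{Cov}(X)=I_n$ alone; writing $e^{-r^{2}|X-X'|^{2}/(2n)}=e^{-r^{2}(|X|^{2}+|X'|^{2})/(2n)}e^{r^{2}\langle X,X'\rangle/n}$ and using $|e^{a}-1|\le|a|e^{|a|}$, the Cauchy--Schwarz inequality, and sub-exponential estimates for log-concave variables (Borell's lemma), one finds that the right-hand side of the second display differs from the square of the right-hand side of the first by $O(r^{2}/\sqrt n)$ --- with \emph{no} thin-shell input. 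Thus $\theta\mapsto\widehat f(r\theta)$ has $\sigma_{n-1}$-mean $e^{-r^{2}/2}+o(1)$ and $\sigma_{n-1}$-variance $o(1)$, uniformly for $r\le T_n$.

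By Chebyshev's inequality, for each fixed $r\le T_n$ the set $\{\theta:|\widehat f(r\theta)-e^{-r^{2}/2}|>\eta\}$ has $\sigma_{n-1}$-measure $o(1)/\eta^{2}$. To make this uniform in $r$, I would use that $r\mapsto\widehat f(r\theta)$ has derivative bounded by $\mathbb E|\langle X,\theta\rangle|\le1$ for every $\theta$, cover $[0,T_n]$ by a net of $\mathrm{poly}(T_n)$ points, apply the pointwise bound on the net, and take a union bound; since $T_n$ may be chosen to grow arbitrarily slowly, the polynomial loss of measure is harmless. This produces $\Theta$, $\omega_n\to0$ and $\varepsilon_n'\to0$ in the required relation, and Esseen's inequality then yields $\varepsilon_n$. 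The one substantial ingredient, and the sole obstacle, is the thin-shell estimate $\mathrm{Var}(|X|^{2})=o(n^{2})$; the rates for $\varepsilon_n$ and $\omega_n$ are inherited from it, which is why the present bounds are poor, as noted in the introduction. The departure from Klartag's original argument is that the concentration on the sphere is applied directly to $\widehat f$, where the second-moment computation collapses to the single clean quantity $\mathbb E\,e^{-r^{2}|X-X'|^{2}/(2n)}$, rather than to the measures of half-spaces.
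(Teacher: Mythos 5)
There is a genuine gap, and it sits exactly where the theorem's difficulty lies: you assume the thin-shell estimate $\mathrm{Var}(|X|^{2})=o(n^{2})$ (equivalently, that $|X|/\sqrt{n}\to 1$ in probability uniformly over isotropic log-concave densities). This is not a classical fact about log-concave functions; it is essentially the hard content of the theorem. The implication ``thin shell $\Rightarrow$ CLT'' is the classical, comparatively easy direction (Sudakov, Anttila--Ball--Perissinaki, Bobkov), and it is what the appendix of the paper proves; your spherical first- and second-moment computation for $\widehat f(r\theta)$ is a clean characteristic-function version of that direction. Concretely, your argument shows (correctly, modulo routine tail estimates for $\langle X,X'\rangle$) that $\theta\mapsto\widehat f(r\theta)$ has small variance on the sphere, hence concentrates around its spherical mean $M(r)=\mathbb{E}\,e^{-r^{2}|X|^{2}/(2n)}+O(1/n)$; but identifying $M(r)$ with $e^{-r^{2}/2}$ \emph{is} the thin-shell property, and Jensen gives only the one-sided bound $M(r)\ge e^{-r^{2}/2}$. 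Without an input beyond isotropy, $M(r)$ could a priori stay far from $e^{-r^{2}/2}$, so the marginals would be approximately identical in most directions but not identifiably Gaussian.

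The paper's proof closes precisely this gap, and the mechanism is worth absorbing because your first half is already close to its Step 1 (the paper, too, applies concentration on the sphere directly to $\widehat f$, via its $2\pi$-Lipschitz property and L\'evy's inequality rather than via a second moment). The extra moves are: (i) work in a \emph{random $k$-dimensional subspace} $E$ with $k\sim\log n/\log\log n$, after convolving with a small Gaussian $\sigma Z$ so that $\widehat h=\widehat f\cdot\widehat\phi_\sigma$ is integrable and Fourier inversion in $E$ converts the approximate rotational invariance of $\widehat h|_E$ into approximate rotational invariance of the density $P_Eh$ of $P_EY$; (ii) use log-concavity of $P_Eh$ to show that for every direction $x\in S^{n-1}\cap E$ the function $t\mapsto t^{k-1}P_Eh(tx)$ peaks at nearly the same $t_x$ and that its mass concentrates near the peak --- this \emph{derives} a thin-shell property for the $k$-dimensional marginal from spherical symmetry plus log-concavity, with no a priori variance bound on $|X|^{2}$; (iii) only then invoke thin-shell-implies-CLT, in dimension $k$, and remove the added noise $\sigma Z$. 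If you want to complete your argument along your own lines, you would need to supply a proof of $\mathrm{Var}(|X|^{2})=o(n^{2})$ for general isotropic log-concave $X$, which is a substantially harder statement (Klartag's power-law paper, Fleury, Gu\'edon--Milman, Lee--Vempala) than the route just described.
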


The proof uses two nontrivial properties of log-concave functions (see \cite%
{Kl, Kl4, Kl2} for more details): with $f$ as in Theorem \ref{CLT conv
bodies},

\noindent $\bullet $ If $E\subset \mathbb{R}^{n}$ is any linear subspace of
dimension $1\leq k<n$, then the projection $P_{E}f:E\rightarrow \left[
0,\infty \right) $ defined by%
\begin{equation}
P_{E}f(x)=\int_{E^{\bot }}f\left( x+y\right) dy  \label{proj def}
\end{equation}%
is log-concave. Here integration is performed with respect to $n-k$
dimensional Lebesgue measure on $E^{\bot }$. This is a consequence of the Pr%
\'{e}kopa-Leindler inequality. Interpreting a convolution in terms of a
projection of $\mathbb{R}^{n}\times \mathbb{R}^{n}$ onto $\mathbb{R}^{n}$,
we see that if $\varphi :\mathbb{R}^{n}\rightarrow \left[ 0,\infty \right) $
is log-concave with $\int_{\mathbb{R}^{n}}\varphi (x)dx=1$, then the
convolution $f\ast \varphi $ is also log-concave.

\noindent $\bullet $ If $X$ has the thin shell property, i.e.%
\[
\mathbb{P}\left\{ \left\vert \frac{\left\vert X\right\vert }{R}-1\right\vert
<\varepsilon ^{\prime }\right\} >1-\varepsilon ^{\prime } 
\]%
for some $\varepsilon ^{\prime },R>0$ (here we can take $R=\sqrt{n}$), then
the projection of $X$ onto most one dimensional subspaces is approximately
Gaussian, with estimates depending on $\varepsilon ^{\prime }$. Quantitative
results of this type for log-concave measures can be found in \cite{ABP, Bo}%
. For completeness, we give a precise statement with proof in Section \ref%
{appendix thin shell implies CLT}.

\section{Proof of Theorem \protect\ref{CLT conv bodies}}

The proof is in three main steps.

\textbf{Step 1: Approximately spherically symmetric projections.} The first
step mimics Milman's proof of Dvoretzky's theorem \cite{Milman}, see for
example \cite{Sch2}, but in a different way to Klartag \cite[Sections 3 and 4%
]{Kl}. Let $Y=X+\sigma Z$ for some $\sigma >0$, where $Z$ has the standard
normal distribution and is independent of $X$. The density of $Y$ is $%
h=f\ast \phi _{\sigma }$, where $\phi _{\sigma }(x)=\left( 2\pi \sigma
^{2}\right) ^{-n/2}\exp \left( -2^{-1}\sigma ^{-2}\left\vert x\right\vert
^{2}\right) $ and $\ast $ denotes convolution. Then $\widehat{h}=\widehat{f}%
\cdot \widehat{\phi }_{\gamma }$, where $\widehat{\cdot }$ denotes the
Fourier transform,%
\[
\widehat{h}\left( \xi \right) =\int_{\mathbb{R}^{n}}\exp \left( -2\pi
i\left\langle \xi ,x\right\rangle \right) h\left( x\right) dx 
\]%
and%
\[
\widehat{\phi }_{\sigma }(\xi )=\exp \left( -2\pi ^{2}\sigma ^{2}\left\vert
\xi \right\vert ^{2}\right) 
\]%
For any $\xi _{1},\xi _{2}\in \mathbb{R}^{n}$,%
\begin{eqnarray*}
\left\vert \widehat{f}\left( \xi _{1}\right) -\widehat{f}\left( \xi
_{2}\right) \right\vert &\leq &\int_{\mathbb{R}^{n}}\left\vert \exp \left(
-2\pi i\left\langle \xi _{1},x\right\rangle \right) -\exp \left( -2\pi
i\left\langle \xi _{2},x\right\rangle \right) \right\vert f(x)dx \\
&\leq &\int_{\mathbb{R}^{n}}2\pi \left\vert \left\langle \xi
_{1},x\right\rangle -\left\langle \xi _{2},x\right\rangle \right\vert f(x)dx
\\
&=&2\pi \left\vert \xi _{1}-\xi _{2}\right\vert \int_{\mathbb{R}%
^{n}}\left\vert \left\langle \frac{\xi _{1}-\xi _{2}}{\left\vert \xi
_{1}-\xi _{2}\right\vert },x\right\rangle \right\vert f(x)dx \\
&\leq &2\pi \left\vert \xi _{1}-\xi _{2}\right\vert \left( \mathbb{E}%
\left\vert \left\langle \frac{\xi _{1}-\xi _{2}}{\left\vert \xi _{1}-\xi
_{2}\right\vert },X\right\rangle \right\vert ^{2}\right) ^{1/2}
\end{eqnarray*}%
and we see that $\widehat{f}$ is $2\pi $-Lipschitz on $\mathbb{R}^{n}$. Let $%
F\in G_{n,k}$ be any fixed subspace and $U$ a random matrix uniformly
distributed in $O(n)$ ($k<n$ to be determined later). Then $E=UF\in G_{n,k}$
is a random $k$-dimensional subspace uniformly distributed in $G_{n,k}$. Let 
$\varepsilon \in \left( 0,1/2\right) $ and let $\mathcal{N}\subset
S_{F}=S^{n-1}\cap F$ be an $\varepsilon $-dense subset (i.e. for all $\theta
\in S_{F}$ there exists $\omega \in \mathcal{N}$ such that $\left\vert
\theta -\omega \right\vert <\varepsilon $. By considering the volume of
disjoint balls, such a subset can be chosen with cardinality $\left\vert 
\mathcal{N}\right\vert \leq \left( 3/\varepsilon \right) ^{k}$. Assume that $%
k\leq c\left( \log \varepsilon ^{-1}\right) ^{-1}\delta n$. By L\'{e}vy's
concentration inequality for Lipschitz functions on a sphere, see e.g. \cite%
{Kl2}, and the union bound, with probability at least%
\[
1-\sum_{m=0}^{\infty }\left( \frac{3}{\varepsilon }\right) ^{k}\exp \left(
-\left\{ \sqrt{c\delta ^{2}+\frac{2\ln m}{n}}\right\} ^{2}n\right) \geq
1-C\exp \left( -c\delta ^{2}n\right) 
\]%
the following event occurs: for all $m\in \left\{ 0,1,2\ldots \right\} $,
and all $\theta \in \mathcal{N}$,%
\[
\left\vert \widehat{f}\left( U\left( 1+\varepsilon \right) ^{m}\sqrt{k}%
\sigma ^{-1}\theta \right) -M\left( \left( 1+\varepsilon \right) ^{m}\sqrt{k}%
\sigma ^{-1}\right) \right\vert <C\left( \delta +\sqrt{\frac{\ln m}{n}}%
\right) \left( 1+\varepsilon \right) ^{m}\sigma ^{-1}\sqrt{k} 
\]%
where%
\[
M\left( t\right) =\int_{S^{n-1}}\widehat{f}\left( t\theta \right) d\sigma
_{n-1}\left( \theta \right) 
\]%
With the same probability, the same event holds with $\left( 1+\varepsilon
\right) ^{m}$ replaced with $\left( 1+\varepsilon \right) ^{-m}$. Setting $%
\xi ^{\prime }=\left( 1+\varepsilon \right) ^{\pm m}\sqrt{k}\sigma
^{-1}\theta $, making $m$ the subject of the formula, and using the
Lipschitz property of $\widehat{f}$, with high probability, for all $\xi \in
F$,%
\[
\left\vert \widehat{f}\left( U\xi \right) -M\left( \left\vert \xi
\right\vert \right) \right\vert <C\left( \delta +\varepsilon +\sqrt{\frac{%
\ln \varepsilon ^{-1}}{n}}+\sqrt{\frac{1}{n}\ln \ln \max \left\{ \frac{%
\sigma \left\vert \xi \right\vert }{\sqrt{k}},\frac{\sqrt{k}}{\sigma
\left\vert \xi \right\vert }\right\} }\right) \left\vert \xi \right\vert 
\]%
Optimizing over $\varepsilon $ we set $\varepsilon =\sqrt{\left( \ln
n\right) /n}$. Let $P_{E}:\mathbb{R}^{n}\rightarrow E$ denote the orthogonal
projection onto $E$, let $\mathcal{F}_{\mathbb{R}^{n}}:$ $L^{1}\left( 
\mathbb{R}^{n}\right) \rightarrow L^{\infty }\left( \mathbb{R}^{n}\right) $
denote the Fourier transform on $\mathbb{R}^{n}$ and let $\mathcal{F}%
_{E}:L^{1}\left( E\right) \rightarrow L^{\infty }\left( E\right) $ denote
the Fourier transform on $E$ ($E$ as a Hilbert space in its own right).
Recall the definition in (\ref{proj def}). By Fubini's theorem, the function 
$P_{E}h$ is the density of the random vector $P_{E}X$ (with respect to $k$%
-dimensional Lebesgue measure in $E$). The Fourier transform works well with
orthogonal projections, in particular%
\[
\left( \mathcal{F}_{\mathbb{R}^{n}}h\right) |_{E}=\mathcal{F}_{E}\left(
P_{E}h\right) 
\]%
where $\left( \mathcal{F}_{\mathbb{R}^{n}}f\right) |_{E}$ denotes the
restriction of $\mathcal{F}_{\mathbb{R}^{n}}f$ to $E$. By Fourier inversion
in $E$, for all $x\in E$,%
\[
P_{E}h\left( x\right) =\int_{E}\exp \left( 2\pi i\left\langle x,\xi
\right\rangle \right) \widehat{h}\left( \xi \right) d\xi 
\]%
so for all $W\in O\left( E\right) $, (applying a change of variables)%
\begin{eqnarray}
&&\left\vert P_{E}h\left( x\right) -P_{E}h\left( Wx\right) \right\vert 
\nonumber \\
&\leq &\int_{E}\left\vert \widehat{h}\left( \xi \right) -\widehat{h}\left(
W\xi \right) \right\vert d\xi  \nonumber \\
&\leq &C\left( 2\pi \sigma ^{2}\right) ^{-(k+1)/2}\int_{E}\left( \delta +%
\sqrt{\frac{\ln n}{n}}+\sqrt{\frac{1}{n}\ln \ln \max \left\{ \frac{%
\left\vert y\right\vert }{\sqrt{2\pi k}},\frac{\sqrt{2\pi k}}{\left\vert
y\right\vert }\right\} }\right) e^{-\pi \left\vert y\right\vert
^{2}}\left\vert y\right\vert dy  \nonumber \\
&\leq &C\left( 2\pi \sigma ^{2}\right) ^{-(k+1)/2}\left( \delta +\sqrt{\frac{%
\ln n}{n}}\right) \sqrt{k}  \label{grape}
\end{eqnarray}

\textbf{Step 2: Behavior of }$t\mapsto P_{E}h\left( t\theta \right) $\textbf{%
\ }(in the spirit of Lemmas 4.3 and 4.4 in \cite{Kl}). Consider any $x,y\in
S_{E}=E\cap S^{n-1}$ and define $A,B:\left[ 0,\infty \right) \rightarrow 
\mathbb{R}$ by%
\[
P_{E}h\left( tx\right) =e^{-A(t)}\hspace{0.45in}P_{E}h\left( ty\right)
=e^{-B(t)} 
\]%
Since $f$ and $\phi $ are log-concave, i.e. $-\log f$ and $-\log \phi $ are
convex with values in $\left( -\infty ,\infty \right] $, $h=f\ast \phi $ is
also log-concave. It follows from the Pr\'{e}kopa-Leindler inequality (see
for example the discussion in \cite{Kl}) that $P_{E}h$ too is log-concave,
and therefore $A$ and $B$ are convex. Since $P_{E}h=\left( P_{E}f\right)
\ast \left( P_{E}\phi _{\sigma }\right) $, $A$ and $B$ are infinitely
differentiable. In preparation for an integral over $E$ in polar
coordinates, we now study $t\mapsto t^{k-1}e^{-A(t)}$ and $t\mapsto
t^{k-1}e^{-B(t)}$, $t\in \left[ 0,\infty \right) $. These functions are
maximized at $t_{x}$,$t_{y}\in \left( 0,\infty \right) $ that satisfy%
\[
A^{\prime }\left( t_{x}\right) t_{x}=k-1\hspace{0.45in}B^{\prime }\left(
t_{y}\right) t_{y}=k-1 
\]%
Such numbers exist since $A^{\prime }(t)t$ is continuous with limit $0$
(resp. $\infty $) as $t\rightarrow 0$ (resp. $t\rightarrow \infty $),
similarly for $B$. After a possible re-labeling of $x$ and $y$ we may assume
that $t_{x}\leq t_{y}$. Our goal is to show that these numbers cannot be too
far apart (in the sense that their ratio is close to $1$). If $t_{x}=t_{y}$
there is nothing to show, so assume $t_{x}<t_{y}$. By convexity,%
\begin{eqnarray*}
A\left( t_{y}\right) -A\left( t_{x}\right) &\geq &A^{\prime }\left(
t_{x}\right) \left( t_{y}-t_{x}\right) =\left( k-1\right) \left( \frac{t_{y}%
}{t_{x}}-1\right) \\
B\left( t_{y}\right) -B\left( t_{x}\right) &\leq &B^{\prime }\left(
t_{y}\right) \left( t_{y}-t_{x}\right) =\left( k-1\right) \left( 1-\frac{%
t_{x}}{t_{y}}\right)
\end{eqnarray*}%
and therefore%
\begin{equation}
\sup_{t\in \left\{ t_{x},t_{y}\right\} }\left\vert A(t)-B(t)\right\vert \geq 
\frac{\left\{ A\left( t_{y}\right) -A\left( t_{x}\right) \right\} -\left\{
B\left( t_{y}\right) -B\left( t_{x}\right) \right\} }{2}=\frac{\left(
k-1\right) \left( t_{y}-t_{x}\right) ^{2}}{2t_{x}t_{y}}  \label{no name aae}
\end{equation}%
Assume momentarily that there exists $t\in \left\{ t_{x},t_{y}\right\} $
such that $A(t)-B(t)\geq 1$. Since $P_{E}h$ is the log-concave density of a
random vector in $E$ with covariance $\left( 1+\sigma ^{2}\right) I$, it
follows from Theorem 5.14 in \cite{LoVe} (see also (\ref{lower bound at zero}%
) here) that $P_{E}h\left( 0\right) \geq 2^{-7k}\left( 1+\sigma ^{2}\right)
^{-k/2}$. By convexity again, 
\begin{eqnarray*}
\left\vert e^{-A(t)}-e^{-B(t)}\right\vert &=&e^{-B(t)}\left\vert
e^{B(t)-A(t)}-1\right\vert \geq \left( 1-e^{-1}\right) e^{-B(t)} \\
&\geq &\left( 1-e^{-1}\right) \exp \left( -B(0)-tB^{\prime }\left( t\right)
\right) \\
&\geq &\left( 1-e^{-1}\right) P_{E}h\left( 0\right) \exp \left(
-t_{y}B^{\prime }\left( t_{y}\right) \right) \\
&\geq &\left( e-1\right) 2^{-7k}\left( 1+\sigma ^{2}\right) ^{-k/2}\exp
\left( -k\right)
\end{eqnarray*}%
However, by (\ref{grape}),%
\[
\left\vert e^{-A(t)}-e^{-B(t)}\right\vert =\left\vert P_{E}h\left( tx\right)
-P_{E}h\left( ty\right) \right\vert \leq C\left( 2\pi \sigma ^{2}\right)
^{-(k+1)/2}\left( \delta +\sqrt{\frac{\ln n}{n}}\right) \sqrt{k} 
\]%
We will choose the parameters $\delta $, $k$, and $\sigma $ so that the
upper bound on $\left\vert e^{-A(t)}-e^{-B(t)}\right\vert $ is less than the
lower bound, which implies that we may assume that $B(t)-A(t)>-1$ for all $%
t\in \left\{ t_{x},t_{y}\right\} $. Now let $t\in \left\{
t_{x},t_{y}\right\} $ such that%
\[
\left\vert A(t)-B(t)\right\vert =\sup_{u\in \left\{ t_{x},t_{y}\right\}
}\left\vert A(u)-B(u)\right\vert 
\]%
By (\ref{no name aae}),%
\begin{eqnarray*}
\left\vert e^{-A(t)}-e^{-B(t)}\right\vert &=&e^{-B(t)}\left\vert
e^{B(t)-A(t)}-1\right\vert \\
&\geq &\exp \left( -B(0)-B^{\prime }(t)t\right) e^{-1}\left\vert
B(t)-A(t)\right\vert \\
&\geq &2^{-7k}\left( 1+\sigma ^{2}\right) ^{-k/2}e^{-k}\frac{\left(
k-1\right) \left( t_{y}-t_{x}\right) ^{2}}{2t_{x}t_{y}}
\end{eqnarray*}%
so%
\[
\frac{t_{y}-t_{x}}{t_{y}}\leq \gamma :=Ce^{ck}\left( 1+\sigma ^{2}\right)
^{k/4}\sigma ^{-(k+1)/2}\left( \delta ^{1/2}+\left( \frac{\ln n}{n}\right)
^{1/4}\right) 
\]%
For an appropriate choice of parameters this will achieve our goal of
showing that $t_{x}$ and $t_{y}$ cannot be too far apart (relatively). What
this means is that in any direction $x\in S^{n-1}\cap E$, the function $%
t\mapsto t^{k-1}P_{E}h\left( tx\right) $ achieves its peak in about the same
place. Our next goal is to show that the mass in%
\[
\int_{0}^{\infty }t^{k-1}P_{E}h\left( tx\right) dt 
\]%
is concentrated around $t_{x}$. Since $A$ lies above its tangent lines,
defining $q$ by%
\begin{eqnarray*}
q(t) &=&t^{k-1}e^{-A(t)}\leq \exp \left( \left( k-1\right) \ln t-A\left(
t_{x}\right) -\left( t-t_{x}\right) A^{\prime }\left( t_{x}\right) \right) \\
&=&\exp \left( -A\left( t_{x}\right) -\left( \frac{t}{t_{x}}-1-\ln \frac{t}{%
t_{x}}-\ln t_{x}\right) \left( k-1\right) \right) \\
&=&\exp \left( -A\left( t_{x}\right) -\left( -\ln t_{x}+\sum_{j=2}^{\infty
}j^{-1}\left( \frac{t}{t_{x}}-1\right) ^{j}\right) \left( k-1\right) \right)
\\
&\leq &t_{x}^{k-1}e^{-A\left( t_{x}\right) }\exp \left( -\frac{k-1}{3}\left( 
\frac{t}{t_{x}}-1\right) ^{2}\right)
\end{eqnarray*}%
provided $\left\vert \frac{t}{t_{x}}-1\right\vert <1/2$. We now translate
this to tail probabilities. Fix any $t\in \left[ t_{x},3t_{x}/2\right] $ and 
$s\geq t$. By log-concavity of $q$,%
\[
q\left( s\right) \leq \left[ \left( \frac{q(t)}{q\left( t_{x}\right) }%
\right) ^{1/\left( t-t_{x}\right) }\right] ^{s-t}q(t)\leq \exp \left( -\frac{%
\left( k-1\right) \left( s-t\right) \left( t-t_{x}\right) }{3t_{x}^{2}}%
\right) q(t) 
\]%
and therefore%
\[
\int_{t}^{\infty }q(s)ds\leq \frac{3t_{x}^{2}q(t)}{\left( k-1\right) \left(
t-t_{x}\right) } 
\]%
On the other hand, for any $s\in \left[ t_{x},t\right] $,%
\[
q\left( s\right) \geq \left[ \left( \frac{q(t_{x})}{q\left( t\right) }%
\right) ^{1/\left( t-t_{x}\right) }\right] ^{t-s}q(t)\geq \exp \left( \frac{%
\left( k-1\right) \left( t-s\right) \left( t-t_{x}\right) }{3t_{x}^{2}}%
\right) q(t) 
\]%
so%
\[
\int_{0}^{\infty }q(s)ds\geq \int_{t_{x}}^{t}q(s)ds\geq \frac{3t_{x}^{2}q(t)%
}{\left( k-1\right) \left( t-t_{x}\right) }\left[ \exp \left( \frac{\left(
k-1\right) \left( t-t_{x}\right) ^{2}}{3t_{x}^{2}}\right) -1\right] 
\]%
and%
\[
\int_{t}^{\infty }q(s)ds\leq \left[ \exp \left( \frac{\left( k-1\right)
\left( t-t_{x}\right) ^{2}}{3t_{x}^{2}}\right) -1\right] ^{-1}\int_{0}^{%
\infty }q(s)ds 
\]%
A similar bound holds for the left hand tail. Combining these,%
\begin{equation}
\int_{(1-u)t_{x}}^{(1+u)t_{x}}q(s)ds\geq \left( 1-C\exp \left(
-cku^{2}\right) \right) \left( \int_{0}^{\infty }q(s)ds\right)
\label{radial conc}
\end{equation}%
provided $u\in \left[ 0,1/2\right] $.

\textbf{Step 3: Thin shell and small details.} Now fix an arbitrary $x\in
B_{2}^{n}\cap E$. By polar integration,%
\begin{equation}
\mathbb{P}\left\{ \left\vert \frac{\left\vert P_{E}Y\right\vert }{t_{x}}%
-1\right\vert <C\left( u+\gamma \right) \right\} \geq 1-C\exp \left(
-cku^{2}\right)  \label{first thin shell}
\end{equation}%
which is the so called 'thin shell property' of $P_{E}Y$ in $E$ (see Section %
\ref{appendix thin shell implies CLT} for more details), and by a result of
Bobkov \cite{Bo} (following Anttila, Ball and Perissinaki \cite{ABP} in the
symmetric case) this implies that with probability at least%
\[
1-C\sqrt{k}\exp \left( -ck\left\{ u+\gamma +\exp \left( -cku^{2}\right)
\right\} ^{2}\right) 
\]%
a further random projection $P_{\theta ^{\prime }}P_{E}Y$ is approximately
Gaussian (with mean zero and variance $1+\sigma ^{2}$), where $\theta
^{\prime }$ is uniformly distributed in $S_{E}$,%
\[
\left\vert \mathbb{P}\left\{ \left\langle \theta ^{\prime
},P_{E}Y\right\rangle \leq t\right\} -\Phi \left( \frac{t}{\sqrt{1+\sigma
^{2}}}\right) \right\vert \leq C\left( u+\gamma +\exp \left( -cku^{2}\right)
\right) 
\]%
See Theorem \ref{thin shell imp CLT}. Now $\left\langle \theta ^{\prime
},P_{E}Y\right\rangle =\left\langle \theta ^{\prime },P_{E}X\right\rangle
+\left\langle \theta ^{\prime },\sigma P_{E}Z\right\rangle $, and $%
\left\langle \theta ^{\prime },P_{E}Z\right\rangle \sim N(0,1)$. Assume that 
$t\geq 0$ and $\sigma \leq 1$, and consider any $\nu \in \left( 0,1\right) $%
. Since%
\begin{eqnarray*}
\left\{ \left\langle \theta ^{\prime },P_{E}Y\right\rangle \leq t-\nu
\right\} &\Rightarrow &\left\{ \left\langle \theta ^{\prime
},P_{E}X\right\rangle \leq t\right\} \vee \left\{ \left\langle \theta
^{\prime },\sigma P_{E}Z\right\rangle \leq -\nu \right\} \\
\left\{ \left\langle \theta ^{\prime },P_{E}X\right\rangle \leq t\right\}
&\Rightarrow &\left\{ \left\langle \theta ^{\prime },P_{E}Y\right\rangle
\leq t+\nu \right\} \vee \left\{ \left\langle \theta ^{\prime },\sigma
P_{E}Z\right\rangle \geq \nu \right\}
\end{eqnarray*}%
by the union bound and (\ref{log lipschitz}), $\mathbb{P}\left\{
\left\langle \theta ^{\prime },P_{E}X\right\rangle \leq t\right\} $ is
bounded below by%
\begin{eqnarray*}
&&\mathbb{P}\left\{ \left\langle \theta ^{\prime },P_{E}Y\right\rangle \leq
t-\nu \right\} -\mathbb{P}\left\{ \left\langle \theta ^{\prime },\sigma
P_{E}Z\right\rangle \leq -\nu \right\} \\
&\geq &\Phi \left( \frac{t-\nu }{\sqrt{1+\sigma ^{2}}}\right) -C\left(
u+\gamma +\exp \left( -cku^{2}\right) \right) -C\exp \left( -c\sigma
^{-2}\nu ^{2}\right) \\
&\geq &\Phi \left( t\right) -C\left( \nu +\sigma +u+\gamma +\exp \left(
-cku^{2}\right) +\exp \left( -c\sigma ^{-2}\nu ^{2}\right) \right)
\end{eqnarray*}%
and above by%
\begin{eqnarray*}
&&\mathbb{P}\left\{ \left\langle \theta ^{\prime },P_{E}Y\right\rangle \leq
t+\nu \right\} +\mathbb{P}\left\{ \left\langle \theta ^{\prime },\sigma
P_{E}Z\right\rangle \geq \nu \right\} \\
&\leq &\Phi \left( t\right) +C\left( \nu +\sigma +u+\gamma +\exp \left(
-cku^{2}\right) +\exp \left( -c\sigma ^{-2}\nu ^{2}\right) \right)
\end{eqnarray*}%
Choosing%
\begin{eqnarray*}
k &=&\frac{c_{1}\ln \left( n+1\right) }{\ln \ln \left( n+2\right) }\hspace{%
0.65in}\delta =\frac{\ln \left( n+1\right) }{\sqrt{n}}\hspace{0.65in}\sigma =%
\frac{1}{\ln \left( n+1\right) } \\
u &=&\frac{C_{2}\ln \ln \left( n+2\right) }{\sqrt{\ln \left( n+1\right) }}%
\hspace{0.45in}\nu =\frac{C_{2}}{\sqrt{\ln \left( n+1\right) }}
\end{eqnarray*}%
(a fairly arbitrary choice), where $c_{1}$ is chosen first to be small and
then $C_{2}$ is chosen to be appropriately large, we get $\gamma \leq
Cn^{-1/5}$ and the error bound reduces to%
\[
\left\vert \mathbb{P}\left\{ \left\langle \theta ^{\prime
},P_{E}X\right\rangle \leq t\right\} -\Phi (t)\right\vert \leq \delta _{n}:=%
\frac{C\ln \ln \left( n+2\right) }{\sqrt{\ln \left( n+1\right) }} 
\]%
the probability bound (of failure) reduces to%
\[
\omega _{n}\leq C\exp \left( -c\delta ^{2}n\right) +C\sqrt{k}\exp \left(
-ck\left\{ u+\gamma +\exp \left( -cku^{2}\right) \right\} ^{2}\right) \leq
C\left( \log n\right) ^{-C_{3}} 
\]%
where $C_{3}$ can be made arbitrarily large by taking $C_{2}$ large enough.
The upper and lower bounds for $\left\vert e^{-A(t)}-e^{-B(t)}\right\vert $
earlier in the proof become (respectively) $Cn^{-1/2+0.1}$ and $Cn^{-0.1}$,
which achieves the desired contradiction, and the required bound $k\leq
c\delta ^{2}\left( \ln n\right) ^{-1}n$ is satisfied. Note that $P_{\theta
^{\prime }}P_{E}=P_{\theta }$ where $\theta $ is uniformly distributed in $%
S^{n-1}$, so we have shown that the projection of $X$ onto most one
dimensional subspaces is approximately Gaussian, and Theorem \ref{CLT conv
bodies} follows.

\textbf{Note: Radius of the thin shell.} When stating and applying the fact
that the thin shell property implies CLT, it is convenient to replace $t_{x}$
with $\sqrt{k}$ in (\ref{first thin shell}). Let $W_{\theta }$ ($\theta \in
S^{n-1}\cap E$) be a random variable with density proportional to $q_{\theta
}(t)=t^{k-1}P_{E}h\left( t\theta \right) $, $t\geq 0$. From (\ref{radial
conc}),%
\begin{eqnarray*}
\mathbb{E}\left\vert W_{\theta }\right\vert ^{2} &=&\left( \mathbb{E}%
\left\vert W_{\theta }\right\vert \right) ^{2}+\mathrm{Var}\left( W_{\theta
}\right) \leq \left( t_{\theta }+Ck^{-1/2}t_{\theta }\right) ^{2}+\frac{%
Ct_{\theta }^{2}}{k} \\
&\leq &t_{x}^{2}\left( 1+C\gamma \right) \left( 1+Ck^{-1/2}\right) +\frac{%
Ct_{x}^{2}}{k}
\end{eqnarray*}%
so%
\begin{eqnarray*}
\mathbb{E}\left\vert P_{E}Y\right\vert ^{2} &=&\mathrm{vol}_{k-1}\left(
S^{k-1}\right) \int_{S^{n-1}\cap E}\left( \int_{0}^{\infty }t^{2}q_{\theta
}(t)\frac{dt}{\int_{0}^{\infty }q_{\theta }(s)ds}\right) \left(
\int_{0}^{\infty }q_{\theta }(s)ds\right) d\sigma _{k-1}\left( \theta \right)
\\
&\leq &\left( 1+C\gamma +Ck^{-1/2}\right) t_{x}^{2}
\end{eqnarray*}%
The last inequality follows since $\mathrm{vol}_{k-1}\left( S^{k-1}\right)
\int_{S^{n-1}\cap E}\int_{0}^{\infty }q_{\theta }(s)dsd\sigma _{k-1}\left(
\theta \right) =1$. Similarly,%
\[
\mathbb{E}\left\vert P_{E}Y\right\vert ^{2}\geq \left( 1-C\gamma
-Ck^{-1/2}\right) t_{x}^{2} 
\]%
But $\mathbb{E}\left\vert P_{E}Y\right\vert ^{2}=k$, so%
\[
\left( 1-C\gamma -Ck^{-1/2}\right) \sqrt{k}\leq t_{x}\leq \left( 1+C\gamma
+Ck^{-1/2}\right) \sqrt{k} 
\]%
and (changing the constants involved) we may replace $t_{x}$ with $\sqrt{k}$
in (\ref{first thin shell}).

\textbf{Note: Lower bound on }$P_{E}f(0)$\textbf{.} To simplify notation we
work with the original function $f:\mathbb{R}^{n}\rightarrow \left[ 0,\infty
\right) $, but the corresponding result can then be applied to $%
P_{E}f:E\rightarrow \left[ 0,\infty \right) $ by replacing $n$ with $k$. By
log-concavity, $\left\{ x\in \mathbb{R}^{n}:f(x)>f(0)\right\} $ is convex
and there exists $\theta \in S^{n-1}$ such that $\left\langle \theta
,x\right\rangle >0$ implies $f(x)\leq f(0)$. It is an interesting exercise
to show that for any log-concave random variable in $\mathbb{R}$ with zero
mean and unit variance, such as $\left\langle \theta ,X\right\rangle $, $%
\mathbb{P}\left\{ \left\langle \theta ,X\right\rangle >0\right\} \geq \beta $
for some universal constant $\beta >0$ (actually for $\beta =e^{-1}$). Now%
\[
n=\mathbb{E}\left\vert X\right\vert ^{2}\geq A^{2}\alpha _{n}^{2}\frac{n}{%
2\pi e}\mathbb{P}\left\{ \left\vert X\right\vert \geq A\alpha _{n}\sqrt{%
\frac{n}{2\pi e}}\right\} \geq A^{2}\alpha _{n}^{2}\frac{n}{2\pi e}\left(
\beta -f(0)\frac{1}{2}\mathrm{vol}_{n}\left( A\alpha _{n}\sqrt{\frac{n}{2\pi
e}}B_{2}^{n}\right) \right) 
\]%
where $\alpha _{n}$ is such that $\mathrm{vol}_{n}\left( \alpha _{n}\sqrt{%
\frac{n}{2\pi e}}\right) =1$ and $\alpha _{n}\rightarrow 1$ as $n\rightarrow
\infty $ (and $B_{2}^{n}=\left\{ x:\left\vert x\right\vert \leq 1\right\} $%
). Optimizing in $A$ yields%
\begin{equation}
f(0)\geq Cn^{-3/2}\left( e\sqrt{2\pi }\right) ^{-n}
\label{lower bound at zero}
\end{equation}%
In the symmetric case one gets the optimal base $\sqrt{2\pi e}$. The
estimate $f(0)\geq 2^{-7n}$ can be found, for example, in \cite[Theorem 5.14]%
{LoVe}.

\section{\label{appendix thin shell implies CLT}Appendix: Thin shell implies
CLT}

For completeness we collect and prove various known results and tailor them
to our specific use. We refer the reader to \cite[Theorems 1.1 and 1.2, Eq.
(1.7) Proposition 3.1]{Bo} and \cite{ABP}\ for a more extensive discussion.
Our proof of Proposition 3.1 in \cite{Bo} on the Lipschitz constant of $%
\theta \mapsto M\left( \theta ,t\right) $ is slightly simplified.

\begin{theorem}
\label{thin shell imp CLT}Let $\varepsilon >0$. Let $\mu $ be a probability
measure on $\mathbb{R}^{k}$ with center of mass $0$, identity covariance,
and log-concave density $f=d\mu /dx$. If $\mu $ has the following thin shell
property:%
\[
\mu \left\{ x\in \mathbb{R}^{k}:\left\vert \frac{\left\vert x\right\vert }{%
\sqrt{k}}-1\right\vert >\varepsilon \right\} <\varepsilon 
\]%
then there exists $\Theta \subset S^{k-1}$ with $\sigma _{n-1}\left( \Theta
\right) \geq 1-C\sqrt{k}\exp \left( -ck\varepsilon ^{2}\right) $ such that
for all $\theta \in \Theta $,%
\[
\sup_{t\in \mathbb{R}}\left\vert \Phi \left( t\right) -\mu \left\{ x\in 
\mathbb{R}^{k}:\left\langle x,\theta \right\rangle \leq t\right\}
\right\vert \leq C\varepsilon 
\]
\end{theorem}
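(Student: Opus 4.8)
The plan is to parametrise the one-dimensional marginals of $\mu$ by their distribution functions $M(\theta,t):=\mu\{x\in\mathbb{R}^{k}:\langle x,\theta\rangle\le t\}$, $\theta\in S^{k-1}$, $t\in\mathbb{R}$, and to establish two facts: that the spherical average $\overline M(t):=\int_{S^{k-1}}M(\theta,t)\,d\sigma_{k-1}(\theta)$ is within $C\varepsilon$ of $\Phi(t)$ uniformly in $t$, and that for each fixed $t$ the function $\theta\mapsto M(\theta,t)$ is concentrated about $\overline M(t)$ sharply enough that a union bound over a net of values of $t$ upgrades this to the required uniform statement. Throughout one may assume $c\,k^{-1/2}\le\varepsilon\le 1/2$: for larger $\varepsilon$ the conclusion $\sup_{t}|M(\theta,t)-\Phi(t)|\le C\varepsilon$ is trivial, and for smaller $\varepsilon$ the bound $C\sqrt{k}\,e^{-ck\varepsilon^{2}}$ is at least $1$ (and in fact the thin-shell hypothesis fails).

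For the averaged statement, Fubini's theorem and the rotational invariance of $\sigma_{k-1}$ give $\overline M(t)=\mathbb{P}\{|X|\,\Xi_{1}\le t\}$, where $\Xi=(\Xi_{1},\dots,\Xi_{k})$ is uniformly distributed on $S^{k-1}$, independent of $X\sim\mu$. Conditioning on the thin shell $\{\,||X|/\sqrt{k}-1|\le\varepsilon\,\}$, which by hypothesis has probability at least $1-\varepsilon$, one has $|X|=(1+O(\varepsilon))\sqrt{k}$ there, so $|X|\,\Xi_{1}$ equals $(1+O(\varepsilon))$ times $\sqrt{k}\,\Xi_{1}$; the classical quantitative estimate $\sup_{s}|\mathbb{P}\{\sqrt{k}\,\Xi_{1}\le s\}-\Phi(s)|=O(k^{-1/2})$ for the coordinate of the uniform probability measure on $S^{k-1}$, together with the elementary inequality $\sup_{t}|\Phi((1+\delta)t)-\Phi(t)|\le C|\delta|$ valid for $|\delta|\le 1/2$, then yields $\sup_{t}|\overline M(t)-\Phi(t)|\le C\varepsilon$.

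The heart of the matter is the concentration of $\theta\mapsto M(\theta,t)$, for which the key input is a Lipschitz estimate --- a streamlined form of Proposition 3.1 in \cite{Bo}: for each fixed $t$, $|M(\theta,t)-M(\theta',t)|\le C|\theta-\theta'|$ on $S^{k-1}$, with $C$ \emph{universal}. I would derive this from the shift inequality $|M(\theta,t)-M(\theta',t)|\le\rho\,\|p_{\theta}\|_{\infty}+\mathbb{P}\{|\langle X,\theta-\theta'\rangle|>\rho\}$, valid for every $\rho>0$, where $p_{\theta}$ denotes the density of $\langle X,\theta\rangle$: since $p_{\theta}$ is log-concave of variance $1$ one has $\|p_{\theta}\|_{\infty}\le C$, and a tail bound for the one-dimensional marginal $\langle X,\theta-\theta'\rangle$ followed by optimisation in $\rho$ gives the claim. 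Equivalently one differentiates, $\partial_{v}M(\theta,t)=-\mathbb{E}[\langle X,v\rangle\mid\langle X,\theta\rangle=t]\,p_{\theta}(t)$ for unit $v\perp\theta$, and combines the uniform exponential decay $p_{\theta}(t)\le Ce^{-|t|/C}$ with a linear-in-$t$ bound for the conditional mean of one isotropic log-concave marginal given another. Granting the Lipschitz estimate, L\'evy's concentration inequality on $S^{k-1}$ gives, for each $t$, $\sigma_{k-1}\{\theta:|M(\theta,t)-\overline M(t)|>\varepsilon\}\le C e^{-ck\varepsilon^{2}}$.

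To pass from a fixed $t$ to all $t$, observe that $M(\theta,\cdot)$ is $C$-Lipschitz in $t$ (its density is bounded by $C$) and that, uniformly in $\theta$, both $M(\theta,t)$ and $\Phi(t)$ are within $\varepsilon$ of $0$ or $1$ as soon as $|t|\ge C\log(1/\varepsilon)$; hence it is enough to treat a mesh $t_{1},\dots,t_{N}$ of $[-C\log(1/\varepsilon),C\log(1/\varepsilon)]$ of spacing $\varepsilon$, so $N\le C\log(1/\varepsilon)/\varepsilon$. Letting $\Theta$ consist of those $\theta$ with $|M(\theta,t_{j})-\overline M(t_{j})|\le\varepsilon$ for all $j$, the previous bound and the union bound give $\sigma_{k-1}(S^{k-1}\setminus\Theta)\le N\,C e^{-ck\varepsilon^{2}}\le C\sqrt{k}\,e^{-ck\varepsilon^{2}}$, the factor $N$ being absorbed into $\sqrt{k}$ together with a slightly smaller constant in the exponent, as $\varepsilon\gtrsim k^{-1/2}$ (so $N$ is polynomial in $k$, while the bound is non-vacuous only when $e^{-ck\varepsilon^{2}}$ outpaces this growth); and for $\theta\in\Theta$ the Lipschitz property in $t$ together with the averaged statement give $\sup_{t}|M(\theta,t)-\Phi(t)|\le C\varepsilon$. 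I expect the Lipschitz estimate of the third paragraph to be the main obstacle: one must control the marginal of $X$ in the single direction $\theta-\theta'$, where --- in contrast to the Gaussian setting --- only sub-exponential concentration is available, and balance this against the uniform bound on the density of $\langle X,\theta\rangle$; once this is done the remaining steps are soft, modulo the bookkeeping of logarithmic factors that disappear into the constants by the reduction to large $k\varepsilon^{2}$.
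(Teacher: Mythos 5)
Your proposal follows essentially the same route as the paper: a Lipschitz estimate for $\theta \mapsto M\left( \theta ,t\right) $, L\'{e}vy concentration on $S^{k-1}$ plus a union bound over a finite set of values of $t$, and a comparison of the spherical average with $\Phi $ using the thin-shell hypothesis, the $O(k^{-1/2})$ proximity of the first coordinate of the uniform measure on $\sqrt{k}S^{k-1}$ to the Gaussian, and the inequality $\left\vert \Phi \left( (1+\delta )t\right) -\Phi \left( t\right) \right\vert \leq C\delta $. One substantive caveat on the step you correctly single out as the crux: of your two suggested derivations of the Lipschitz estimate, the shift-inequality one is lossy. Since $\left\langle X,\theta -\theta ^{\prime }\right\rangle $ has only sub-exponential tails, optimizing $\rho \left\Vert p_{\theta }\right\Vert _{\infty }+\mathbb{P}\left\{ \left\vert \left\langle X,\theta -\theta ^{\prime }\right\rangle \right\vert >\rho \right\} $ forces $\rho \sim \left\vert \theta -\theta ^{\prime }\right\vert \log \left( 1/\left\vert \theta -\theta ^{\prime }\right\vert \right) $ and yields modulus of continuity $C\eta \log (1/\eta )$ rather than $C\eta $; fed into L\'{e}vy concentration this degrades the exponent to $-ck\varepsilon ^{2}/(\log k)^{2}$ in the relevant range $\varepsilon \gtrsim k^{-1/2}$, which does not quite give the stated bound. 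Your second derivation (differentiating and pairing a linear-in-$t$ bound on the conditional mean with $p_{\theta }(t)\leq Ce^{-c\left\vert t\right\vert }$) does give the clean Lipschitz constant; the paper instead projects onto the two-plane $\mathrm{span}\left\{ \theta _{1},\theta _{2}\right\} $ and bounds the measure of the symmetric difference of the two half-spaces (a wedge) against the $Ce^{-c\left\vert x\right\vert }$ decay of the isotropic log-concave two-dimensional marginal, obtaining the stronger $t$-dependent constant $Ce^{-c\left\vert t\right\vert }\left\vert \theta _{1}-\theta _{2}\right\vert $ directly. The remaining differences are cosmetic: your net is a uniform mesh in $t$ combined with Lipschitzness of $M\left( \theta ,\cdot \right) $ in $t$, whereas the paper uses the quantiles $t_{j}=F^{-1}\left( j/m\right) $ and monotonicity; both work, and the bookkeeping you describe for absorbing the cardinality of the net into $C\sqrt{k}\exp \left( -ck\varepsilon ^{2}\right) $ is the same reduction the paper makes implicitly.
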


\begin{proof}
Write $M\left( \theta ,t\right) =\mu \left\{ x\in \mathbb{R}%
^{k}:\left\langle x,\theta \right\rangle \leq t\right\} $. For any $\theta
_{1},\theta _{2}\in S^{k-1}$ that are sufficiently close, say $\left\vert
\theta _{1}-\theta _{2}\right\vert <1/10$,%
\[
\left\vert M\left( \theta _{1},t\right) -M\left( \theta _{2},t\right)
\right\vert =\mu \left( M\left( \theta _{1},t\right) \Delta M\left( \theta
_{2},t\right) \right) 
\]%
where $A\Delta B=\left( A\backslash B\right) \cup \left( B\backslash
A\right) $ denotes the symmetric difference of $A$ and $B$. By projecting
onto $span\left\{ \theta _{1},\theta _{2}\right\} $ and identifying $%
span\left\{ \theta _{1},\theta _{2}\right\} $ with $\mathbb{R}^{2}$, we
conclude that%
\[
\left\vert M\left( \theta _{1},t\right) -M\left( \theta _{2},t\right)
\right\vert =\int_{-\infty }^{t}\int_{\left( 1-x\cos \beta \right) /\sin
\beta }^{\infty }q(x)dydx+\int_{t}^{\infty }\int_{-\infty }^{\left( 1-x\cos
\beta \right) /\sin \beta }q(x)dydx 
\]%
where $q$ is the density of the measure projection of $\mu $ into $E$
(identified with $\mathbb{R}^{2}$), see (\ref{proj def}), and $\cos \beta
=\left\langle \theta _{1},\theta _{2}\right\rangle $. By the Pr\'{e}%
kopa-Leindler inequality $q$ is log-concave, and defines a probability
measure with mean $0$ and identity covariance. It is an elementary fact that
for such a function, $q(x,y)\leq C\exp \left( -cx_{1}-cx_{2}\right) $ with
universal constants $C,c>0$. By a change of variables (through translation),%
\[
\left\vert M\left( \theta _{1},t\right) -M\left( \theta _{2},t\right)
\right\vert \leq 2C\int_{-\infty }^{0}\int_{t}^{-y\tan \beta }\exp \left(
-c^{\prime }x-c^{\prime }y\right) dxdy\leq Ce^{-c\left\vert t\right\vert
}\left\vert \theta _{1}-\theta _{2}\right\vert 
\]%
This implies that $M\left( \theta ,t\right) $ is $Ce^{-c\left\vert
t\right\vert }$-Lipschitz in $\theta $. Now let $\theta \in S^{k-1}$ be
chosen randomly, uniformly distributed on $S^{k-1}$ and let $F(t)=\mathbb{E}%
M\left( \theta ,t\right) $. By concentration on $S^{n-1}$ (see e.g. \cite%
{Kl2}) and the union bound, with probability at least $1-C\varepsilon
^{-1}\exp \left( -cn\varepsilon ^{2}\right) =1-C\sqrt{k}\left( k\varepsilon
^{2}\right) ^{-1/2}\exp \left( -ck\varepsilon ^{2}\right) $, the following
event occurs: for all $1\leq j\leq m$, $\left\vert M\left( \theta
,t_{j}\right) -F(t_{j})\right\vert <\varepsilon $, where $m=\left\lfloor
\varepsilon ^{-1}\right\rfloor $ and $t_{j}=F^{-1}\left( j/m\right) $. Using
monotonicity in $t$, we conclude that (with high probability) $\left\vert
M\left( \theta ,t\right) -F(t)\right\vert <C\varepsilon $ for all $t\in 
\mathbb{R}$. We now compare $F$ to $\Phi $. Let $\Phi _{k}\left( t\right) =%
\mathbb{P}\left\{ \sqrt{k}\theta _{1}\leq t\right\} $, where $\theta $ is
still uniform on $S^{k-1}$. Let $X$ be a random vector in $\mathbb{R}^{k}$
with distribution $\mu $ and independent of $\theta $. The vector $%
Y=\left\langle \theta ,k^{1/2}\left\vert X\right\vert ^{-1}X\right\rangle $
is independent of $k^{-1/2}\left\vert X\right\vert $ and has the same
distribution as $\theta _{1}$. Using Fubini's theorem and independence, and
assuming $t>0$,%
\begin{eqnarray*}
F(t) &=&\mathbb{P}\left\{ \left\langle \theta ,X\right\rangle \leq t\right\}
=\mathbb{P}\left\{ \frac{\left\vert X\right\vert }{\sqrt{k}}\left\langle
\theta ,\frac{\sqrt{k}X}{\left\vert X\right\vert }\right\rangle \leq
t\right\} =\mathbb{P}\left\{ Y\leq \frac{t\sqrt{k}}{\left\vert X\right\vert }%
\right\} \\
&=&\mathbb{P}\left\{ \left\vert \frac{\left\vert X\right\vert }{\sqrt{k}}%
-1\right\vert <\varepsilon \right\} \mathbb{P}\left\{ Y\leq \frac{t\sqrt{k}}{%
\left\vert X\right\vert }:\left\vert \frac{\left\vert X\right\vert }{\sqrt{k}%
}-1\right\vert <\varepsilon \right\} \\
&&+\mathbb{P}\left\{ \left\vert \frac{\left\vert X\right\vert }{\sqrt{k}}%
-1\right\vert >\varepsilon \right\} \mathbb{P}\left\{ Y\leq \frac{t\sqrt{k}}{%
\left\vert X\right\vert }:\left\vert \frac{\left\vert X\right\vert }{\sqrt{k}%
}-1\right\vert >\varepsilon \right\} \\
&\leq &1\cdot \Phi _{k}\left( \frac{t\sqrt{k}}{\left( 1-\varepsilon \right) 
\sqrt{k}}\right) +\varepsilon \cdot 1
\end{eqnarray*}%
A similar lower bound holds. For any $\delta ,x>0$,%
\begin{equation}
\Phi \left( \left( 1+\delta \right) x\right) -\Phi \left( x\right) \leq \Phi
^{\prime }\left( x\right) \delta x\leq C\delta  \label{log lipschitz}
\end{equation}%
It follows from rotational invariance of the standard normal distribution
and uniqueness of Haar measure that if $Z$ is a standard normal vector in $%
\mathbb{R}^{k}$ then $\sqrt{k}\left\vert Z\right\vert ^{-1}Z$ is uniformly
distributed on $\sqrt{k}S^{k-1}$. Simulating $\theta =\sqrt{k}\left\vert
Z\right\vert ^{-1}Z$,%
\[
\Phi _{k}(t)-\Phi _{k}(-t)=\mathbb{P}\left\{ \left\vert Z_{1}\right\vert
\leq tk^{-1/2}\left\vert Z\right\vert \right\} =\mathbb{P}\left\{ \left\vert
Z_{1}\right\vert \leq t\left( 1-\frac{t^{2}}{k}\right) ^{-1/2}\left( \frac{1%
}{k}\sum_{i=2}^{k}Z_{i}^{2}\right) \right\} 
\]%
which (after a bit of fiddling using (\ref{log lipschitz}) and Gaussian
concentration of $\left\vert Z\right\vert $ about $k^{1/2}$) implies the
well known estimate $\left\vert \Phi (t)-\Phi _{k}(t)\right\vert \leq
ck^{-1/2}$ for all $t\in \mathbb{R}$ (this can also be seen by considering
the density $\Phi _{k}^{\prime }$, similar details in \cite[Section 3]{Fr14}%
). Putting all this together,%
\[
F(t)\leq \Phi _{k}\left( \frac{t}{\left( 1-\varepsilon \right) }\right)
+\varepsilon \leq \Phi \left( \frac{t}{\left( 1-\varepsilon \right) }\right)
+\frac{C}{\sqrt{k}}+\varepsilon \leq \Phi \left( t\right) +C\varepsilon +%
\frac{C}{\sqrt{k}} 
\]%
with a similar lower bound. Similarly, this also holds for $t<0$.
\end{proof}

\end{document}